\newtheorem{theorem}{Theorem}
\newtheorem{lemma}[theorem]{Lemma}
\newtheorem{corol}[theorem]{Corollary}
\theoremstyle{definition}
\theoremstyle{remark}
\newtheorem{remark}[theorem]{Remark}
\renewcommand{\emptyset}{\mbox{\textup{\O}}}
\newcommand{\re}{\mathbb R}
\DeclareMathOperator{\supp}{supp}
\newcommand{\subRn}{{{\mathbb R}^n}}
\newcommand{\la}{\langle}
\newcommand{\Q}{\mathcal{Q}}
\def\Xint#1{\mathchoice
  {\XXint\displaystyle\textstyle{#1}}%
  {\XXint\textstyle\scriptstyle{#1}}%
  {\XXint\scriptstyle\scriptscriptstyle{#1}}%
  {\XXint\scriptscriptstyle\scriptscriptstyle{#1}}%
  \!\int}
\def\XXint#1#2#3{{\setbox0=\hbox{$#1{#2#3}{\int}$}
    \vcenter{\hbox{$#2#3$}}\kern-.5\wd0}}
\def\avgint{\Xint-}
\begin{document}

\title[Sharp weighted estimates]{Sharp weighted estimates for approximating dyadic operators}

\author{David Cruz-Uribe, SFO}
\address{David Cruz-Uribe, SFO\\
Dept. of Mathematics \\ Trinity College \\
Hartford, CT 06106-3100, USA} \email{david.cruzuribe@trincoll.edu}

\author{Jos\'e Mar{\'\i}a Martell}

\address{Jos\'e Mar{\'\i}a Martell
\\
Instituto de Ciencias Matem\'aticas CSIC-UAM-UC3M-UCM
\\
Consejo Superior de Investigaciones Cient{\'\i}ficas
\\
C/ Serrano 121
\\
E-28006 Madrid, Spain} \email{chema.martell@uam.es}

\author{Carlos P\'erez}
\address{Carlos P\'erez
\\
Departamento de An\'alisis Matem\'atico, Facultad de Matem\'aticas\\
Universidad de Sevilla, 41080 Sevilla, Spain}
\email{carlosperez@us.es}

\subjclass{42B20, 42B25} \keywords{$A_p$ weights, Haar shift
operators singular  integral operators, Hilbert transform, Riesz
transforms, Beurling-Ahlfors operator, dyadic square function,
vector-valued maximal operator}

\thanks{The first author was supported by a grant from the Faculty
  Research Committee and the Stewart-Dorwart Faculty Development Fund
  at Trinity College; the first and third authors are supported by
  grant MTM2009-08934 from the Spanish Ministry of Science and
  Innovation; the second author is supported by grant MTM2007-60952
  from the same institution and by CSIC PIE
  200850I015. }

\begin{abstract}
We give a new proof of the sharp weighted $L^p$ inequality
\[ \|T\|_{L^p(w)} \leq
C_{n,T}\,[w]_{A_p}^{\max\left(1,\frac{1}{p-1}\right)},
\]
where $T$ is the Hilbert transform, a Riesz transform, the
Beurling-Ahlfors operator  or any operator that can be approximated
by Haar shift operators. Our proof avoids the Bellman function
technique and two weight norm inequalities. We use instead a recent
result due to A.~Lerner~\cite{lernerP2009} to estimate the
oscillation of dyadic operators.

The method we use is flexible enough to obtain the sharp one-weight
result for other important operators as well as a very sharp
two-weight bump type result for $T$ as can be found in
\cite{cruz-uribe-martell-perez2010}.

\end{abstract}

\date{January 25, 2010}

\maketitle

\section{Introduction}

One weight norm inequalities for singular integrals of the form
\[ \|Tf\|_{L^p(w)} \leq C\|f\|_{L^p(w)} \qquad w \in A_p, \]
have a long history, beginning with the work of Hunt, Muckenhoupt
and Wheeden \cite{MR0312139} for the Hilbert transform. (See
Duoandikoetxea~\cite{MR1800316} for a concise history.)  The
constant $C$ depends on the $A_p$ constant of the weight $w$:
\[ [w]_{A_p} =\sup_Q \avgint_Q w(x)\,dx \left(\avgint_Q
 w(x)^{1-p'}\,dx\right)^{p-1}. \]
An interesting question is the exact dependence on the $A_p$
constant. This was first investigated by Buckley~\cite{MR1124164}.
More recently, this problem has attracted renewed attention because
of the work of Astala, Iwaniec and Saksman~\cite{MR1815249}.  They
proved sharp regularity results for solutions to the Beltrami
equation, assuming that the operator norm of the Beurling-Ahlfors
transform growths linearly in terms of the $A_2$ constant. This was
proved by S. Petermichl and A. Volberg~\cite{MR1894362} and by
Petermichl~\cite{MR2354322,MR2367098} for the Hilbert transform and
the Riesz Transforms. In these papers it has been shown that if $T$
is any of these operators, then
\begin{equation}\label{pet}
\|T\|_{L^p(w)}\le
c_{p,n}\,[w]_{A_p}^{\max\left(1,\frac{1}{p-1}\right)}
\qquad1<p<\infty,
\end{equation}
and the exponent $\max\left(1,\frac{1}{p-1}\right)$ is best
possible. It has been conjectured that the same estimate
holds for any Calder\'on-Zygmund operator $T$. By the sharp version
of the Rubio de Francia extrapolation theorem due to
Dragi{\v{c}}evi{\'c} {\em et al.}~\cite{MR2140200}, it suffices to
prove this inequality for $p=2$, namely
\begin{equation}\label{CZA2conjecture}
\|T\|_{L^2(w)}\le c_{n}\, [w]_{A_2}.
\end{equation}
In each of the known cases, the proof used a technique developed by
Petermichl~\cite{MR1756958} to reduce the problem to proving the
analogous inequality for a corresponding Haar shift operator.  The
norm inequalities for these dyadic operators were then proved using
Bellman function techniques.

Recently, Lacey, Petermichl and
Reguera-Rodriguez~\cite{lacey-petermichl-reguera-rodriquezP2009}
gave a proof of the sharp $A_2$ constant for a large family of Haar
shift operators that includes all of the dyadic operators needed for
the above results.  Their proof avoids the use of Bellman functions,
and instead uses a deep, two-weight ``$Tb$ theorem'' for Haar shift
operators due to Nazarov, Treil and Volberg~\cite{MR2407233}.

We give a different proof that avoids both Bellman functions and
two-weight norm inequalities such as the $Tb$ theorem.  Instead, we
use a very interesting decomposition argument using local mean
oscillation recently developed by Lerner~\cite{lernerP2009}.

An important advantage of our approach is that it also yields the
optimal sharp one weight norm inequalities for other operators such
as dyadic square functions and paraproducts, maximal singular
integrals and the vector-valued maximal function of C.
Fefferman-Stein. Moreover it also gives very sharp two weight
``$A_p$ bump'' type conditions that improve results gotten by the
authors in~\cite{MR2351373}.  All these results can be found in
\cite{cruz-uribe-martell-perez2010}. Key to our approach is that the
operators are either dyadic or can be approximated by dyadic
operators (e.g., by the Haar shift operators defined below).  Thus
all these results will extend to any operator that can be
approximated in this way.

To state our result we first give some definitions
following~\cite{lacey-petermichl-reguera-rodriquezP2009} and
consider simultaneously a family of dyadic operators---the Haar
shift operators---that contains all the operators we are interested
in.

Let $\Delta$ be the set of dyadic cubes in $\re^n$.  For our
arguments we properly need to consider the sets $\Delta_{s,t}$,
$s\in \re^n$, $t>0$, of translations and dilations of dyadic cubes.
However, it will be immediate that all of our arguments for dyadic
cubes extend to these more general families, so without loss of
generality we will restrict ourselves to dyadic cubes.

We define a Haar function on a cube $Q\in \Delta$ to be a function
$h_Q$ such that
\begin{enumerate}
\item $\supp(h_Q) \subset Q$;
\item if $Q'\in \Delta$ and $Q'\subsetneq Q$, then $h_Q$ is constant on $Q'$;
\item $\|h_Q\|_\infty \leq |Q|^{-1/2}$;
\item $\int_Q h_Q(x)\,dx = 0$.
\end{enumerate}

Given an integer $\tau\ge 0$, a Haar shift operator of index $\tau$
is an operator of the form
\[ H_\tau f(x) = \sum_{Q\in \Delta} \sum_{\substack{Q',Q''\in \Delta(Q)\\ 2^{-\tau n}|Q|\leq |Q'|,|Q''|}} a_{Q',Q''} \la f, h_{Q'} \rangle h_{Q''}(x), \]
where $a_{Q',Q''}$ is a constant such that
\[ |a_{Q',Q''}| \leq C\,\left( \frac{|Q'|}{|Q|}\frac{|Q''|}{|Q|}\right)^{1/2}. \]
We say that $H_\tau$ is a CZ Haar shift operator if it is bounded on
$L^2$.

An important example of a Haar shift operator when $n=1$ is the Haar
shift (also known as the dyadic Hilbert transform)  $H^d$, defined
by
\[ H^df(x) = \sum_{I\in \Delta} \langle f, h_I\rangle \big(
h_{I_-}(x) - h_{I_+}(x)\big), \]
where, as before, given a dyadic interval $I$, $I_+$ and $I_-$ are
its right and left halves, and
\[ h_I(x)  =  |I|^{-1/2}\big(\chi_{I_-}(x) - \chi_{I_+}(x)\big). \]
Clearly $h_I$ is a Haar function on $I$ and one can write $H^d$ as a
Haar shift operator of index $\tau=1$ with $a_{I',I''}=\pm 1$ for
$I'=I$, $I''=I_{\pm}$ and $a_{I',I''}=0$ otherwise. These are the
operators used by Petermichl~\cite{MR1756958,MR2354322} to
approximate the Hilbert transform.  More precisely, she used the
family of operators $H^d_{s,t}$, $s\in \re$, $t>0$, which are
defined as above but with the dyadic grid replaced by its
translation by $s$ and dilation by $t$.    The Hilbert transform is
then the limit of integral averages of these operators, so norm
inequalities for $H$ follow from norm inequalities for $H^d_{s,t}$
by Fatou's lemma and Minkowski's inequality.    Similar
approximations hold for the Riesz transforms and Beurling-Ahlfors
operator, and we refer the reader to \cite{MR2367098,MR1894362} for
more details.

\bigskip

We can now state our main result.

\begin{theorem} \label{thm:main-thm}
Let $H_\tau$ be a CZ Haar shift operator where $\tau \geq 0$ is an
integer. Then for every $w\in A_2$,
\[ \|H_\tau\|_{L^2(w)} \leq C(\tau, n) [w]_{A_2}. \]
As a consequence, the same norm inequality holds for the Hilbert
transform, the Riesz transforms, and the Beurling-Ahlfors operator.
\end{theorem}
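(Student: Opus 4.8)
The plan is to avoid Bellman functions and two-weight inequalities entirely and instead use Lerner's local mean oscillation decomposition to dominate $H_\tau f$ pointwise by a positive ``sparse'' operator. It suffices to prove the estimate for $f$ bounded with compact support and then argue by density. Recall the decomposition: for a fixed (large) dyadic cube $Q_0$ there is a sparse family $\mathcal S=\{Q_j^k\}$ of dyadic subcubes of $Q_0$ --- sparse meaning that each $Q\in\mathcal S$ contains a measurable set $E_Q$ with $|E_Q|\ge\frac12|Q|$ and the $E_Q$ are pairwise disjoint --- such that
\[
|H_\tau f(x)-m_{H_\tau f}(Q_0)|\le 2\sum_{Q\in\mathcal S}\omega_{\lambda}(H_\tau f;Q)\,\chi_Q(x)\qquad\text{for a.e. }x\in Q_0,
\]
where $\lambda=\lambda_n\in(0,1)$ is a fixed dimensional constant, $m_{H_\tau f}(Q_0)$ is a median of $H_\tau f$ on $Q_0$, and $\omega_\lambda(g;Q)=\inf_c\big((g-c)\chi_Q\big)^*(\lambda|Q|)$ is the local mean oscillation. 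Letting $Q_0\uparrow\re^n$ along a suitable sequence, the median term tends to $0$ because $H_\tau f\in L^2(\re^n)$ ($H_\tau$ being bounded on $L^2$); this is a routine limiting point rather than the heart of the matter.

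The crux is the estimate of $\omega_\lambda(H_\tau f;Q)$, and here I would exploit the dyadic structure of $H_\tau$. On a cube $Q$, a single term $a_{Q',Q''}\langle f,h_{Q'}\rangle\,h_{Q''}$ appearing in $H_\tau f$ (with $Q',Q''$ subcubes of some $P$) is identically zero if $Q''\cap Q=\emptyset$ and is constant if $Q''\supsetneq Q$, by property (b) of Haar functions; in the remaining terms $Q''\subseteq Q$, and then the index-$\tau$ restriction $|Q'|,|Q''|\ge 2^{-\tau n}|P|$ forces $Q'\subseteq P\subseteq Q^{(\tau)}$, where $Q^{(\tau)}$ is the $\tau$-th dyadic ancestor of $Q$. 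Consequently $H_\tau f$ coincides on $Q$, up to an additive constant, with $H_\tau(f\chi_{Q^{(\tau)}})$, and therefore
\[
\omega_\lambda(H_\tau f;Q)\le\big(H_\tau(f\chi_{Q^{(\tau)}})\,\chi_Q\big)^*(\lambda|Q|)\le\frac{C(\tau,n)}{\lambda\,|Q|}\int_{Q^{(\tau)}}|f|\le C(\tau,n)\avgint_{Q^{(\tau)}}|f|,
\]
the middle inequality being the weak-type $(1,1)$ bound for $H_\tau$ together with Chebyshev. That a CZ Haar shift operator is of weak type $(1,1)$ with a constant $C(\tau,n)$ --- uniform over the shifts that approximate the Hilbert, Riesz and Beurling--Ahlfors operators --- follows from its $L^2$ boundedness and the standard size and smoothness (H\"ormander) estimates that the structural definition forces on its kernel.

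Combining the two displays yields the pointwise domination
\[
|H_\tau f(x)|\le C(\tau,n)\sum_{Q\in\mathcal S}\Big(\avgint_{Q^{(\tau)}}|f|\Big)\chi_Q(x)=:C(\tau,n)\,\mathcal A f(x),
\]
where the dilated cubes $\{Q^{(\tau)}:Q\in\mathcal S\}$ are dyadic with bounded overlap at each scale, which is all that is needed below. It then remains to prove the sharp weighted bound for this sparse operator, $\|\mathcal A f\|_{L^2(w)}\le C_n\,[w]_{A_2}\,\|f\|_{L^2(w)}$ for $w\in A_2$, a Buckley-type estimate for a positive dyadic operator. I would prove it by duality with $\sigma=w^{-1}$: the associated bilinear form is, up to bounded-overlap corrections, $\sum_{Q\in\mathcal S}\big(\sigma(Q)w(Q)/|Q|\big)\langle f\rangle_{\sigma,Q}\langle g\rangle_{w,Q}$, and one uses $\sigma(Q)w(Q)/|Q|\le[w]_{A_2}^{1/2}\big(\sigma(Q)w(Q)\big)^{1/2}$, Cauchy--Schwarz, the sparseness of $\mathcal S$, and the Carleson embedding theorem (equivalently, the $L^2(\mu)$ boundedness of the dyadic maximal operator with respect to a measure $\mu$). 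Once $\|H_\tau\|_{L^2(w)}\le C(\tau,n)[w]_{A_2}$ is established, the inequalities for the Hilbert transform, the Riesz transforms and the Beurling--Ahlfors operator follow because each of these is an integral average of translated and dilated Haar shift operators $H_\tau$ whose index and $L^2$ norm stay bounded, so one concludes by Minkowski's inequality and Fatou's lemma.

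I expect the main obstacle to be the oscillation estimate --- pinning down and proving the localization property of index-$\tau$ Haar shifts and feeding it cleanly into Lerner's decomposition --- together with the verification that CZ Haar shift operators are of weak type $(1,1)$ with constant depending only on $\tau$ and $n$ for the relevant approximating families. The sparse operator bound and the limiting argument for the median term, while indispensable, are comparatively routine.
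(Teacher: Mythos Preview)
Your plan is essentially the paper's: Lerner's decomposition, the oscillation estimate $\omega_\lambda(H_\tau f;Q)\le C_{\tau,n}\,\avgint_{Q^{(\tau)}}|f|$ via the localization property of index-$\tau$ shifts and the weak $(1,1)$ bound, and then a sharp $A_2$ estimate for the resulting positive sparse operator by duality. The paper's version of Lerner's formula carries an additional term $M^{\#,d}_{\lambda,Q_0}(H_\tau f)$, handled separately via $M^{\#,d}_{\lambda,Q_0}(H_\tau f)\le C\,Mf$ and Buckley's bound for $M$; your single-sum formulation is a harmless later variant.

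The one place your sketch wobbles is the sparse-operator bound. The route you write---use $\sigma(Q)w(Q)/|Q|\le[w]_{A_2}^{1/2}(\sigma(Q)w(Q))^{1/2}$, then Cauchy--Schwarz, then Carleson embedding for $\{\sigma(Q)\}_{Q\in\mathcal S}$ in $L^2(\sigma)$---does \emph{not} give a linear $[w]_{A_2}$ bound as written: a family that is sparse for Lebesgue measure is $\sigma$-Carleson only with constant of order $[\sigma]_{A_\infty}$ (since $\sigma(Q)\lesssim[\sigma]_{A_\infty}\,\sigma(E_Q)$ is the best one can say from $|E_Q|\ge\tfrac12|Q|$), and likewise for $w$, so you pick up extra weight-dependent factors. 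What the paper does, and what your parenthetical actually points to, is different: use the \emph{full} $A_2$ bound $w(P_j^k)\sigma(P_j^k)/|P_j^k|^2\le[w]_{A_2}$, replace $|Q_j^k|$ by $|E_j^k|$ via sparseness, dominate $\langle fw\rangle_{\sigma,P_j^k}\,\langle h\rangle_{w,Q_j^k}$ pointwise on $E_j^k$ by $M^d_{\sigma}(fw)\,M^d_w h$, sum over the disjoint $E_j^k$, and invoke the universal bound $\|M^d_\mu\|_{L^2(\mu)\to L^2(\mu)}\le 2$. That produces exactly one factor of $[w]_{A_2}$ with no further weight dependence. Drop the $[w]_{A_2}^{1/2}$ step and run this argument instead.
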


If we apply the sharp version of the Rubio de Francia extrapolation
theorem~\cite{MR2140200} mentioned above, we get sharp $L^p$
estimates.

\begin{corol} \label{cor.from.main-thm}
Let $H_\tau$ as above and let $1<p<\infty$. Then for every $w\in
A_p$,
\[ \|H_\tau\|_{L^p(w)} \leq C(\tau, n,p) \,[w]_{A_p}^{\max\left(1,\frac{1}{p-1}\right)}. \]
As a consequence, the same norm inequality holds for the Hilbert
transform, the Riesz transforms, and the Beurling-Ahlfors operator.
\end{corol}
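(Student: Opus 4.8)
The plan is to deduce the corollary from Theorem~\ref{thm:main-thm} by invoking the sharp version of the Rubio de Francia extrapolation theorem of Dragi{\v{c}}evi{\'c} \emph{et al.}~\cite{MR2140200}, so that essentially no new analysis is required. Recall the form of that theorem we use: if a sublinear operator $T$ (more precisely, the associated family of pairs of nonnegative functions) satisfies, for some fixed $p_0$ with $1<p_0<\infty$ and some $\alpha>0$,
\[
\|Tf\|_{L^{p_0}(w)} \le C_0\,[w]_{A_{p_0}}^{\alpha}\,\|f\|_{L^{p_0}(w)} \qquad \text{for every } w\in A_{p_0},
\]
with $C_0$ independent of $w$, then for every $p$ with $1<p<\infty$ and every $w\in A_p$ one has
\[
\|Tf\|_{L^{p}(w)} \le C\,[w]_{A_{p}}^{\,\alpha\,\max\left(1,\frac{p_0-1}{p-1}\right)}\,\|f\|_{L^{p}(w)},
\]
where $C$ depends only on $C_0$, $p$ and $p_0$.

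First I would apply this with $T=H_\tau$, $p_0=2$ and $\alpha=1$. Theorem~\ref{thm:main-thm} supplies exactly the required hypothesis, with $C_0=C(\tau,n)$ depending only on $\tau$ and $n$, since the estimate there holds for every $w\in A_2$ with a constant independent of $w$. To apply extrapolation rigorously one works with the pairs $(|f|,|H_\tau f|)$ for $f$ in a suitable dense class (e.g. bounded functions of compact support, on which $H_\tau$ is well defined), and the conclusion then extends to all of $L^p(w)$ by a routine density/limiting argument. Because $p_0=2$ gives $\max\big(1,\frac{p_0-1}{p-1}\big)=\max\big(1,\frac{1}{p-1}\big)$, the conclusion is precisely
\[
\|H_\tau\|_{L^p(w)} \le C(\tau,n,p)\,[w]_{A_p}^{\max\left(1,\frac{1}{p-1}\right)},
\]
which is the asserted inequality.

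For the stated consequences I would proceed in either of two equivalent ways. The quickest is to note that Theorem~\ref{thm:main-thm} already records the bound $\|T\|_{L^2(w)}\le c_n[w]_{A_2}$ for $T$ equal to the Hilbert transform, a Riesz transform, or the Beurling--Ahlfors operator, so the same extrapolation step, applied directly to each of these operators, yields the claimed $L^p(w)$ estimate. Alternatively, one applies the extrapolation conclusion above to the full family $H^d_{s,t}$ of translated and dilated dyadic shifts, with a constant uniform in $s$ and $t$ exactly as in Theorem~\ref{thm:main-thm}, obtaining the sharp $L^p(w)$ bound for each $H^d_{s,t}$, and then transfers it to $T$ via the integral-average representation recalled in the introduction, using Fatou's lemma and Minkowski's integral inequality to preserve the bound. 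There is no genuine obstacle here: all of the analytic content is contained in Theorem~\ref{thm:main-thm}, and the only points requiring care are the (standard) verification of the hypotheses of the extrapolation theorem and the bookkeeping that converts $p_0=2$, $\alpha=1$ into the exponent $\max\!\big(1,\frac{1}{p-1}\big)$.
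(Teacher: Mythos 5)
Your proposal is correct and matches the paper's argument exactly: the corollary is obtained by applying the sharp Rubio de Francia extrapolation theorem of Dragi\v{c}evi\'c \emph{et al.} to the $A_2$ bound of Theorem~\ref{thm:main-thm}, with $p_0=2$ and $\alpha=1$ yielding the exponent $\max\left(1,\frac{1}{p-1}\right)$, and the classical operators handled via the Haar shift approximation (Fatou plus Minkowski) recalled in the introduction. No further comment is needed.
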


The remainder of this paper is organized as follows.  In
Section~\ref{section:lerner} we give some preliminary definitions
and state Lerner's decomposition theorem.  In
Section~\ref{section:dyadic} we prove an estimate which allows us to
apply this decomposition to the CZ Haar shift operators.  In
Section~\ref{section:main-proof} we prove
Theorem~\ref{thm:main-thm}.

\section{Local mean oscillation}
\label{section:lerner}

We begin with some basic definitions.  We follow the terminology and
notation of Lerner~\cite{lernerP2009}, which in turn is based on
Fujii~\cite{MR946637,MR1115188} and Jawerth and
Torchinsky~\cite{MR779906}. We note in passing that many of the
underlying ideas originated in the work of Carleson~\cite{MR0477058}
and Garnett and Jones~\cite{MR658065}.

Hereafter we assume that all functions $f$ are measurable and
finite-valued almost everywhere. Given a cube $Q$ and $\lambda$,
$0<\lambda<1$, define the local mean oscillation of $f$ on $Q$ by
\[ \omega_\lambda(f,Q) = \inf_{c\in \re }
\big((f-c)\chi_Q\big)^*(\lambda |Q|), \]
where $f^*$ is the non-increasing rearrangement of $f$.  The local
sharp maximal function of $f$ relative to $Q$ is then defined by
\[ M^\#_{\lambda,Q} f(x) = \sup_{\substack{Q'\ni x\\Q'\subset Q}}
\omega_\lambda(f,Q).  \]

A median value of $f$ on $Q$ is a (possibly not unique) number
$m_f(Q)$ such that both
\begin{gather*}
| \{ x\in Q : f(x) > m_f(Q) \}| \leq \frac{|Q|}{2}, \\
| \{ x\in Q : f(x) < m_f(Q) \}| \leq \frac{|Q|}{2}.
\end{gather*}
The median plays the same role for the local sharp maximal function
as the mean does for the C.~Fefferman-Stein sharp maximal function.
More precisely, for each $\lambda$, $0<\lambda \leq 1/2$,
\[ \omega_\lambda(f,Q) \leq  \big((f-m_f(Q))\chi_Q\big)^*(\lambda
|Q|) \leq 2\omega_\lambda(f,Q).\]

To estimate the median and the local mean oscillation we need the
following properties that follow from the definition of
rearrangements.  For any function $f$,  $\lambda$,  $0<\lambda<1$,
$p$, $0<p<\infty$, and cube $Q$,
\begin{gather}
(f\chi_{Q })^*(\lambda |Q|)\le \lambda^{-1/p}\,
\|f\|_{L^{p,\infty}(Q,|Q|^{-1}dx)}, \label{eqn:mean-est1} \\
(f\chi_{Q })^*(\lambda |Q|)\le
\left(\frac{1}{\lambda|Q|}\int_Q|f|^p\,dx\right)^{1/p}.
\label{eqn:mean-est2}
\end{gather}

Inequality \eqref{eqn:mean-est1} is central to our proofs as  it
allows us to use  weak $(1,1)$ inequalities directly in our
estimates.  By way of comparison, in~\cite{MR2351373} a key
technical difficulty resulted from having to use Kolmogorov's
inequality rather than the weak $(1,1)$ inequality for a singular
integral. Overcoming this is the reason the results there were
limited to log bumps.

Finally, from the definition of rearrangements we have that
\begin{equation}
|m_f(Q)| \leq (f\chi_Q)^*(|Q|/2), \label{eqn:median-f*}
\end{equation}
and so by \eqref{eqn:mean-est2}, if $f\in L^p$ for any $p>0$, then
$m_f(Q)\rightarrow 0$ as $|Q|\rightarrow \infty$.

\bigskip

Finally, to state Lerner's decomposition theorem, we use the
following notation. Given a cube $Q_0$, let $\Delta(Q_0)$ be the
collection of dyadic cubes relative to $Q_0$. And given $Q\in
\Delta(Q_0)$, $Q\neq Q_0$, let $\widehat{Q}$ be its dyadic parent:
the unique dyadic cube containing $Q$ whose side-length is twice
that of $Q$.

\begin{theorem}  \label{thm:lerner}
\textup{(\cite{lernerP2009})}\; Given a measurable function $f$ and
a cube $Q_0$, for each $k\geq 1$ there exists a (possibly empty)
collection of pairwise disjoint cubes $\{Q_j^k\} \subset
\Delta(Q_0)$ such that if $\Omega_k = \bigcup_j Q_j^k$, then
$\Omega_{k+1}\subset \Omega_k$ and $|\Omega_{k+1}\cap Q_j^k|\leq
\frac{1}{2}|Q_j^k|$.   Furthermore, for almost every $x\in Q_0$,
\[ |f(x)-m_f(Q_0)| \leq 4M^\#_{\frac{1}{4},Q_0}f(x) +
4\sum_{k,j}
\omega_{\frac{1}{2^{n+2}}}(f,\widehat{Q}_j^k)\chi_{Q_j^k}(x). \]
\end{theorem}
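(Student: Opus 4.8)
The plan is to produce the cubes $\{Q_j^k\}$ by a recursive Calder\'on--Zygmund stopping-time construction and then to obtain the pointwise bound by telescoping medians along descending chains of stopping cubes. Throughout I write $\lambda=2^{-n-2}$, the value in the statement. I would first isolate three elementary facts. \emph{(i) Median stability}: if $R\subseteq Q$ and $|\{x\in R:|f(x)-m_f(Q)|>t\}|<\tfrac12|R|$, then $|m_f(R)-m_f(Q)|\le t$ for any choice of medians; this is immediate from the two defining inequalities of a median. \emph{(ii) Parent--child oscillation}: if $\widehat P$ is the dyadic parent of $P$, then $|m_f(P)-m_f(\widehat P)|\le 2\,\omega_\lambda(f,\widehat P)$, because with $s=|m_f(P)-m_f(\widehat P)|$ the function $|f-m_f(\widehat P)|$ exceeds every $t<s$ on at least half of $P$, i.e.\ on a set of measure $\ge 2^{-n-1}|\widehat P|>\lambda|\widehat P|$, so $\big((f-m_f(\widehat P))\chi_{\widehat P}\big)^*(\lambda|\widehat P|)\ge s$, and one concludes with the displayed inequality relating this rearrangement to $\omega_\lambda$ (valid since $\lambda\le\tfrac12$). \emph{(iii) Scaling monotonicity}: $\omega_\mu(f,R)\le\omega_{\mu|R|/|S|}(f,S)$ whenever $R\subseteq S$, straight from the definition of the non-increasing rearrangement; its special case for a dyadic cube and its parent reads $\omega_{1/4}(f,P)\le\omega_\lambda(f,\widehat P)$, since $\tfrac14\cdot 2^{-n}=\lambda$.

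For the construction: given a cube $Q$, set $c_Q=\big((f-m_f(Q))\chi_Q\big)^*\big(\tfrac14|Q|\big)$ and $E_Q=\{x\in Q:|f(x)-m_f(Q)|>c_Q\}$, so that $|E_Q|\le\tfrac14|Q|$ and $c_Q\le 2\,\omega_{1/4}(f,Q)$. Let the stopping cubes of $Q$ be the maximal dyadic $P\subsetneq Q$ with $|E_Q\cap P|\ge\tfrac12|P|$: they are pairwise disjoint, their union $\Omega_Q$ satisfies $|\Omega_Q|\le 2|E_Q|\le\tfrac12|Q|$, the parent of a stopping cube is not stopping and so meets $E_Q$ in strictly less than half its measure, and by the Lebesgue differentiation theorem $|f-m_f(Q)|\le c_Q$ a.e.\ on $Q\setminus\Omega_Q$. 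Applying this to $Q_0$, then to each stopping cube, and iterating, produces $\{Q_j^k\}$ with $\Omega_{k+1}\subseteq\Omega_k$, $|\Omega_{k+1}\cap Q_j^k|\le\tfrac12|Q_j^k|$, and hence $|\Omega_k|\le 2^{-k}|Q_0|\to0$; so for a.e.\ $x\in Q_0$ there is a last stage $k_0=k_0(x)\ge0$ with $x\in\Omega_{k_0}$ (put $\Omega_0:=Q_0$).

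Fix such an $x$. If $k_0=0$ then $|f(x)-m_f(Q_0)|\le c_{Q_0}\le 2\,\omega_{1/4}(f,Q_0)\le 2\,M^\#_{1/4,Q_0}f(x)$ and we are done. If $k_0\ge1$, let $P_0:=Q_0\supseteq P_1\supseteq\cdots\supseteq P_{k_0}$ be the chain of stopping cubes through $x$ ($P_l$ a stopping cube of $P_{l-1}$), with dyadic parents $\widehat P_l\subseteq P_{l-1}$, so that $\widehat P_{k_0}\subseteq\cdots\subseteq\widehat P_1\subseteq Q_0$. Using $x\in P_{k_0}\setminus\Omega_{k_0+1}$ one has $|f(x)-m_f(\widehat P_{k_0})|\le c_{P_{k_0}}+|m_f(P_{k_0})-m_f(\widehat P_{k_0})|$, and telescoping the medians $m_f(\widehat P_{k_0}),m_f(\widehat P_{k_0-1}),\dots,m_f(Q_0)$ through the intermediate $m_f(P_l)$ bounds $|f(x)-m_f(Q_0)|$ by a sum of the quantities $c_{P_{l-1}}$ (each controlled by median stability (i) applied to the non-stopping parent $\widehat P_l$ of $P_l$) and the parent--child jumps $|m_f(P_l)-m_f(\widehat P_l)|$. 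By (i)--(iii), $c_{P_0}=c_{Q_0}\le 2\,M^\#_{1/4,Q_0}f(x)$, while $c_{P_{l-1}}\le 2\,\omega_{1/4}(f,P_{l-1})\le 2\,\omega_\lambda(f,\widehat P_{l-1})$ for $l\ge2$ and $|m_f(P_l)-m_f(\widehat P_l)|\le 2\,\omega_\lambda(f,\widehat P_l)$. Organizing the bound so that each $\omega_\lambda(f,\widehat P_l)$, $1\le l\le k_0$, is counted at most four times yields
\[
|f(x)-m_f(Q_0)|\;\le\;2\,M^\#_{1/4,Q_0}f(x)+4\sum_{l=1}^{k_0}\omega_\lambda(f,\widehat P_l),
\]
and since the cubes of a fixed stage are disjoint, $\sum_{l=1}^{k_0}\omega_\lambda(f,\widehat P_l)=\sum_{k,j}\omega_\lambda(f,\widehat Q_j^k)\chi_{Q_j^k}(x)$, which is the asserted inequality (indeed with $2$ in place of $4$ on the maximal-function term).

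The step I expect to be the main obstacle is the accounting in the last display: one must verify that after telescoping each $\omega_\lambda(f,\widehat P_l)$ appears with bounded multiplicity, and it is precisely this that forces the value $\lambda=1/(4\cdot 2^n)$ --- the scaling monotonicity (iii) is exactly what converts the $\omega_{1/4}$ that naturally arises on a stage-$l$ cube into an $\omega_\lambda$ on its \emph{parent}, which is why the sum in the theorem involves $\widehat Q_j^k$ rather than $Q_j^k$. The remaining ingredients --- existence of the maximal stopping cubes, the Lebesgue-differentiation step on $Q\setminus\Omega_Q$, and the nullity of $\bigcap_k\Omega_k$ --- are routine once the measure estimate $|\Omega_Q|\le\tfrac12|Q|$ is in hand.
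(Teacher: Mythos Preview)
The paper does not prove this theorem; it is quoted verbatim from Lerner's paper~\cite{lernerP2009} and used as a black box, so there is no in-paper proof to compare against. Your argument is correct and is precisely the construction in Lerner's original paper: a Calder\'on--Zygmund stopping-time selection based on the level set $E_Q=\{|f-m_f(Q)|>((f-m_f(Q))\chi_Q)^*(|Q|/4)\}$, followed by a telescoping of medians along the resulting chain of stopping cubes. The three lemmas you isolate (median stability for non-stopping cubes, the parent--child jump bound, and the scaling $\omega_{1/4}(f,P)\le\omega_{2^{-n-2}}(f,\widehat P)$) are exactly what drives that proof, and your accounting --- each $\omega_\lambda(f,\widehat P_l)$ arising once from the $c_{P_l}$ term and once from the jump $|m_f(P_l)-m_f(\widehat P_l)|$ --- is clean and gives the constant $4$ on the sum. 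Your observation that the maximal-function term carries only a $2$ rather than a $4$ is also correct; Lerner states $4$ uniformly for simplicity.
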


\begin{remark} \label{rem:disjoint}
If for all $j$ and $k$ we define $E_j^k=Q_j^k\setminus
\Omega_{k+1}$, then the sets $E_j^k$ are pairwise disjoint and
$|E_j^k|\geq \frac{1}{2}|Q_j^k|$.
\end{remark}

\begin{remark}
 Though it is not explicit in \cite{lernerP2009}, it follows at once
 from the proof that we can
 replace $M^\#_{\frac{1}{4},Q_0}$ by the corresponding dyadic
 operator $M^{\#,d}_{\frac{1}{4},Q_0}$, where
\[ M^{\#,d}_{\lambda,Q}f(x) = \sup_{x\in Q'\in \Delta(Q)} \omega_\lambda (f,Q'). \]
\end{remark}

Intuitively, one may think of the cubes $\{Q_j^k\}$ as being the
analog of the Calder\'on-Zygmund cubes for the function $f-m_f(Q_0)$
but defined with respect to the median instead of the mean.

\section{Local mean oscillation of the Haar shift operators}
\label{section:dyadic}

To apply Theorem~\ref{thm:lerner} to the Haar shift operators we
need two lemmas.  The first is simply that CZ Haar shift operators
satisfy a weak $(1,1)$ inequality.  The proof of this is known but
we could not find it in the literature and it is explicit in
\cite{cruz-uribe-martell-perez2010}. Here and below we will use the
following notation: given an integer $\tau\ge 0$ and a dyadic cube
$Q$, let $Q^\tau$ denote its $\tau$-th generation ``ancestor'': that
is, the unique dyadic cube $Q^\tau$ containing $Q$ such that
$|Q^\tau|=2^{\tau n}|Q|$.

\begin{lemma}
Given an integer $\tau\geq 0$,  there exists a constant $C_{\tau,n}$
such that for every $t>0$,
\[ \|H_\tau f \|_{L^{1,\infty}(\re^n)}  \leq C_{\tau,n}\,\int_{\re^n} |f(x)|\,dx. \]
\end{lemma}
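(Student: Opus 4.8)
The plan is to prove the weak $(1,1)$ bound by the standard Calderón–Zygmund decomposition adapted to the dyadic structure, using the fact that $H_\tau$ is ``local'' at scale $\tau$: each term $\la f,h_{Q'}\ra h_{Q''}$ appearing in $H_\tau$ is supported in a common dyadic ancestor of $Q',Q''$ of controlled size, and $H_\tau$ does not see oscillations at scales finer than the cubes it acts on. First I would fix $t>0$ (the scaling parameter is harmless, so one may as well take $t=1$ and work with ordinary dyadic cubes) and perform a Calderón–Zygmund decomposition of $f$ at height $t$: there is a pairwise disjoint family of maximal dyadic cubes $\{Q_i\}$ with $t < \avgint_{Q_i}|f| \le 2^n t$, and writing $f = g + b$ with $g = f\chi_{(\bigcup Q_i)^c} + \sum_i \big(\avgint_{Q_i} f\big)\chi_{Q_i}$ and $b = \sum_i b_i$, $b_i = (f - \avgint_{Q_i} f)\chi_{Q_i}$, one has $\|g\|_\infty \lesssim t$, $\|g\|_1 \le \|f\|_1$, each $b_i$ is supported in $Q_i$ with mean zero and $\|b_i\|_1 \le 2\int_{Q_i}|f|$, and $\sum_i |Q_i| \le t^{-1}\|f\|_1$.

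For the good part, $\|H_\tau g\|_{L^{1,\infty}} \lesssim t^{-1}\|H_\tau g\|_{L^2}^2 \lesssim t^{-1}\|g\|_2^2 \lesssim t^{-1}\cdot t\cdot\|g\|_1 \le \|f\|_1$, using $L^2$-boundedness of the CZ Haar shift and the $L^\infty$/$L^1$ bounds on $g$. The bad part is where the dyadic structure is exploited: for each $i$ let $\widetilde{Q_i} = Q_i^\tau$ be the $\tau$-th dyadic ancestor of $Q_i$, and set $\Omega = \bigcup_i \widetilde{Q_i}$, so that $|\Omega| \le \sum_i |\widetilde{Q_i}| = 2^{\tau n}\sum_i |Q_i| \le 2^{\tau n} t^{-1}\|f\|_1$. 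It then suffices to show $\int_{\re^n\setminus\Omega} |H_\tau b_i(x)|\,dx \lesssim_{\tau,n} \int_{Q_i}|f|$ and sum in $i$. The key claim is that $H_\tau b_i$ is supported in $\widetilde{Q_i} = Q_i^\tau$: indeed, in the expansion of $H_\tau b_i$, a term $\la b_i, h_{Q'}\ra h_{Q''}$ can be nonzero only if $\la b_i, h_{Q'}\ra \ne 0$. Since $\supp b_i \subset Q_i$ and $b_i$ has mean zero on $Q_i$, while $h_{Q'}$ is supported in $Q'$ and (by Haar property (2)) constant on proper dyadic subcubes of $Q'$: if $Q' \subsetneq Q_i$ then $b_i$ restricted to $Q'$ is a constant times $\chi_{Q'}$... actually $b_i$ is only known constant on subcubes of the $Q_j$'s, so one argues instead that $\la b_i,h_{Q'}\ra = 0$ unless $Q' \subset Q_i$ (when $Q_i \subsetneq Q'$, $h_{Q'}$ is constant on $Q_i \supset \supp b_i$, so the integral vanishes by mean-zero; when $Q'$ and $Q_i$ are disjoint the integral is trivially zero). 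Hence only $Q' \subset Q_i$ contributes, and then the shift constraint $2^{-\tau n}|Q| \le |Q'|$ with $Q', Q'' \in \Delta(Q)$ forces $Q'' \subset Q \subset (Q')^\tau \subset Q_i^\tau = \widetilde{Q_i}$, so $h_{Q''}$ is supported in $\widetilde{Q_i}$.

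Given the support claim, $\int_{\re^n\setminus\Omega}|H_\tau b_i| = 0$ since $\supp H_\tau b_i \subset \widetilde{Q_i} \subset \Omega$, so actually the bad part contributes nothing outside $\Omega$; combining, $|\{|H_\tau f| > t\}| \le |\{|H_\tau g| > t/2\}| + |\Omega| \lesssim_{\tau,n} t^{-1}\|f\|_1$. I expect the main obstacle to be the careful bookkeeping in the support claim: one must check the nesting of the cubes $Q, Q', Q''$ under the constraints $Q',Q'' \in \Delta(Q)$ and $2^{-\tau n}|Q| \le |Q'|,|Q''|$ and verify that $\la b_i, h_{Q'}\ra$ vanishes unless $Q' \subset Q_i$ — this last point uses only Haar property (2) together with the mean-zero property of $b_i$ on $Q_i$, not the finer structure, so it goes through; the rest is the routine Calderón–Zygmund machinery. (The passage from dyadic cubes to the translated/dilated grids $\Delta_{s,t}$, and the uniformity in $t$, is immediate because every constant above depends only on $\tau$ and $n$.)
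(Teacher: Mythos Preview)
Your argument is correct. The paper itself does not prove this lemma: it only notes that the result is known and refers to the companion paper \cite{cruz-uribe-martell-perez2010} for an explicit proof. The Calder\'on--Zygmund decomposition you carry out, with the key observation that $H_\tau b_i$ is supported in $Q_i^\tau$ (because $\langle b_i, h_{Q'}\rangle=0$ unless $Q'\subset Q_i$, and then $Q''\subset Q\subset (Q')^\tau\subset Q_i^\tau$), so that the bad part vanishes identically outside $\Omega$, is the standard approach and almost certainly the one in that reference. One minor slip: in the good-part estimate you wrote that the weak-$L^1$ norm of $H_\tau g$ is controlled by $t^{-1}\|H_\tau g\|_2^2$; what you mean (and what you actually use) is the level-set bound $|\{|H_\tau g|>t/2\}| \le C\,t^{-2}\|H_\tau g\|_2^2$, which then yields $C\,t^{-1}\|f\|_1$ as claimed.
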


Our second lemma is a key estimate that is sharper variant of a
result known for Calder\'on-Zygmund singular integrals (see
\cite{MR779906}) and whose proof is similar.  For completeness we
include the details.

\begin{lemma} \label{lemma:osc-est}
Given $\tau \geq 0$, let $H_\tau$ be a CZ Haar shift operator. Fix
$\lambda$,  $0<\lambda \leq 1/2$. Then for any function $f$,  every
dyadic cube $Q_0$, and every $x\in Q_0$,
\begin{gather*}
\omega_\lambda (H_\tau f,Q_0) \leq
C_{\tau,n,\lambda}\,\avgint_{Q_0^\tau}|f(x)|\,dx, \\
M^{\#,d}_{\lambda,Q_0}(H_\tau f)(x) \leq C_{\tau,n,\lambda} \,
M^df(x).
\end{gather*}
\end{lemma}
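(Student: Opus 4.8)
The plan is to prove the first inequality directly from the definition of the local mean oscillation and then deduce the second inequality as an immediate corollary. For the first estimate, I would use the freedom in the definition of $\omega_\lambda$ to subtract a cleverly chosen constant $c$, namely the value of $H_\tau$ applied to the ``far'' part of $f$, evaluated at a point of $Q_0$. Specifically, split $f = f_1 + f_2$ where $f_1 = f\chi_{Q_0^\tau}$ and $f_2 = f\chi_{(Q_0^\tau)^c}$. The key structural observation is that because $H_\tau$ has index $\tau$, the value of $H_\tau f_2$ on $Q_0$ should be constant: any dyadic cube $Q'$ contributing a term $\langle f_2, h_{Q'}\rangle h_{Q''}(x)$ with $x \in Q_0$ and $h_{Q''}$ not constant on $Q_0$ must have $Q'' \supsetneq Q_0$ with $|Q''| \le 2^{\tau n}|Q|$ for the governing cube $Q$, forcing $Q \subset Q_0^\tau$; but then $\langle f_2, h_{Q'}\rangle = 0$ since $Q' \subset Q \subset Q_0^\tau$ makes $h_{Q'}$ supported where $f_2$ vanishes. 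So $c := H_\tau f_2(x_0)$ for any fixed $x_0 \in Q_0$ is a genuine constant on $Q_0$, and $(H_\tau f - c)\chi_{Q_0} = (H_\tau f_1)\chi_{Q_0}$ pointwise a.e.

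\textbf{Main estimate.} With this reduction, $\omega_\lambda(H_\tau f, Q_0) \le \big((H_\tau f_1)\chi_{Q_0}\big)^*(\lambda|Q_0|)$. Now I would apply inequality~\eqref{eqn:mean-est1} with $p=1$, which gives
\[
\big((H_\tau f_1)\chi_{Q_0}\big)^*(\lambda|Q_0|) \le \lambda^{-1}\,\|H_\tau f_1\|_{L^{1,\infty}(Q_0,|Q_0|^{-1}dx)} = \frac{\lambda^{-1}}{|Q_0|}\,\|H_\tau f_1\|_{L^{1,\infty}(\re^n)}.
\]
Here the whole point of routing through the weak-type norm rather than an $L^1$ average (via \eqref{eqn:mean-est2}) is that the weak $(1,1)$ bound for $H_\tau$ from the first Lemma is available but no strong $(1,1)$ bound is. Applying that Lemma to $f_1$ yields $\|H_\tau f_1\|_{L^{1,\infty}} \le C_{\tau,n}\int_{Q_0^\tau}|f|\,dx$, and since $|Q_0^\tau| = 2^{\tau n}|Q_0|$ we get
\[
\omega_\lambda(H_\tau f, Q_0) \le \frac{C_{\tau,n}\,\lambda^{-1}}{|Q_0|}\int_{Q_0^\tau}|f|\,dx = C_{\tau,n}\,\lambda^{-1}\,2^{\tau n}\avgint_{Q_0^\tau}|f|\,dx,
\]
which is the first claimed inequality with $C_{\tau,n,\lambda} = C_{\tau,n}\,2^{\tau n}\lambda^{-1}$.

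\textbf{The sharp maximal bound.} For the second inequality, fix $x \in Q_0$ and take any dyadic $Q' \in \Delta(Q_0)$ with $x \in Q'$. The first inequality applied to $Q'$ gives $\omega_\lambda(H_\tau f, Q') \le C_{\tau,n,\lambda}\avgint_{(Q')^\tau}|f|\,dx$. Since $(Q')^\tau$ is a dyadic cube containing $x$, this average is at most $M^d f(x)$. Taking the supremum over all such $Q'$ gives $M^{\#,d}_{\lambda,Q_0}(H_\tau f)(x) \le C_{\tau,n,\lambda}\,M^d f(x)$, as required.

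\textbf{Main obstacle.} The one genuinely delicate point is the claim that $H_\tau f_2$ is constant on $Q_0$ — this is where the index-$\tau$ structure of the Haar shift operator must be used carefully, tracking exactly which pairs $(Q', Q'')$ in the double sum can produce a non-constant contribution on $Q_0$ from the far part $f_2$, and verifying that all such contributions vanish by the support/cancellation properties (1) and (4) of Haar functions. Everything after that reduction is bookkeeping with rearrangements and the weak $(1,1)$ bound. I should also be mindful that convergence of the defining series for $H_\tau f$ needs to make sense when I localize; since $f_1 \in L^1$ with compact support the weak $(1,1)$ bound handles $H_\tau f_1$, and $H_\tau f_2$ only enters through a single pointwise value that I have argued is the same everywhere on $Q_0$, so no further integrability issue arises.
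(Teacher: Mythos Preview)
Your proof is correct and follows essentially the same approach as the paper: split $f$ into near and far parts at scale $Q_0^\tau$, show that $H_\tau$ of the far part is constant on $Q_0$, and then apply \eqref{eqn:mean-est1} together with the weak $(1,1)$ bound to control $H_\tau f_1$; the second inequality is deduced from the first exactly as in the paper. One small slip in your constancy argument: if $h_{Q''}$ is not constant on $Q_0$ you should conclude $Q''\subset Q_0$ (not $Q''\supsetneq Q_0$, which by property~(b) would force $h_{Q''}$ to be constant on $Q_0$); with that correction your deduction $Q\subset Q_0^\tau$ and hence $\langle f_2,h_{Q'}\rangle=0$ goes through and is simply the contrapositive of the paper's version.
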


\begin{proof}
It suffices to prove the first inequality; the second follows
immediately from definition of $M^{\#,d}_{\lambda,Q_0}$. Fix $Q_0$
and write $H_\tau$ as the sum of two operators:
\[ H_\tau f(x) = H_\tau(f\chi_{Q_0^\tau})(x) + H_\tau(f\chi_{\re^n\setminus Q_0^\tau})(x). \]
We claim the second term is constant for all $x\in Q_0$.  Let $Q$ be
any dyadic cube.  Then the corresponding term in the sum defining
$H_\tau(f\chi_{\re^n\setminus Q_0^\tau})(x)$ is
\begin{equation} \label{eqn:osc-est1}
\sum_{\substack{Q',Q''\in\Delta(Q)\\ 2^{-\tau n}|Q|\leq |Q'|,|Q''|}}
a_{Q',Q''} \la f\chi_{\re^n\setminus Q_0^\tau}, h_{Q'} \rangle
h_{Q''}(x).
\end{equation}
We may assume that $Q''\cap Q_0\neq \emptyset$ (otherwise we get a
zero term); since $Q''\subset Q$, this implies that $Q\cap
Q_0^\tau\neq \emptyset$.   Similarly, we have $Q\cap (\re^n\setminus
Q_0^\tau)\neq \emptyset$.  Therefore, $Q_0^\tau \subsetneq Q$, so
$|Q_0| <2^{-\tau n}|Q|\leq |Q''|$.  Hence, $Q_0\subsetneq Q''$ and
$h_{Q''}$ is constant on $Q_0$. Thus, \eqref{eqn:osc-est1} does not
depend on $x$ and so is constant on $Q_0$.

Denote this constant by $H_\tau f(Q_0)$; then
\[
|\{ x \in Q_0 : |H_\tau f(x)- H_\tau f(Q_0)|> t \}| = |\{ x \in Q_0
: |H_\tau(f\chi_{Q_0^\tau})(x)|>t \}|.
\]
Since $H_\tau$ is a CZ Haar shift operator it is weak $(1,1)$.
Therefore, by inequality~\eqref{eqn:mean-est1},
\begin{multline*}
\omega_\lambda (H_\tau f, Q_0)   \leq \big((H_\tau f- H_\tau
f(Q_0))\chi_{Q_0}\big)^*(\lambda |Q_0|) \\
\leq
\lambda^{-1}\|H_\tau(f\chi_{Q_0^\tau})\|_{L^{1,\infty}(Q_0,|Q_0|^{-1}dx)}
\leq \frac{C_{\tau,n}}{\lambda} \avgint_{Q_0^\tau} |f(x)|\,dx.
\end{multline*}
\end{proof}

\section{The proof of Theorem~\ref{thm:main-thm}}
\label{section:main-proof}

\begin{proof}[Proof of Theorem \ref{thm:main-thm}]

Fix $w\in A_2$ and fix $f$. By a standard approximation argument we
may assume without loss of generality that $f$ is bounded and has
compact support.  Let $\re^n_j$, $1\le j\le 2^n$, denote the
$n$-dimensional quadrants in $\re^n$: that is, the sets
$I^{\pm}\times I^{\pm}\times\cdots\times I^{\pm}$ where $I^+ =
[0,\infty)$ and $I^-=(-\infty,0)$.

For each $j$, $1\le j\le 2^n$, and for each $N>0$ let $Q_{N,j}$ be
the dyadic cube adjacent to the origin of side length $2^N$ that is
contained in $\re^n_j$. Since  $Q_{N,j}\in \Delta$,
$\Delta(Q_N)\subset \Delta$. Since $H^\tau$ is a CZ shift operator
its adjoint is as well; thus, $H^\tau$ is bounded on $L^p$,
$1<p<\infty$.    In particular, by \eqref{eqn:median-f*} and
\eqref{eqn:mean-est2}, $m_{H_\tau f} (Q_{N,j})\rightarrow 0$ as
$N\rightarrow \infty$.   Therefore, by Fatou's lemma and Minkowski's
inequality,
\[ \|H_\tau f\|_{L^2(w)} \leq \liminf_{N\rightarrow \infty}
\sum_{j=1}^{2^n} \left(\int_{Q_{N,j}} |H_\tau f(x) - m_{H_\tau
f}(Q_{N,j})|^2w(x)\,dx\right)^{1/2}. \]
Hence, it will suffice to prove that each term in the sum on the
right is bounded by $C_{\tau,n}[w]_{A_2}\|f\|_{L^2(w)}$.

Fix $j$ and let $Q_N=Q_{N,j}$.  By Theorem~\ref{thm:lerner} and
Lemma~\ref{lemma:osc-est}, for every $x\in Q_N$ we have that
\begin{align}\label{eqn:Htau-decomp}
& |H_\tau f(x) - m_{H_\tau f}(Q_N)|
\\ \nonumber
&\qquad \qquad \le 4\,M^{\#,d}_{\frac{1}{4},Q_N}(H_\tau f)(x) +
4\sum_{j,k} \omega_{\frac{1}{2^{n+2}}}(H_\tau
f,\widehat{Q}_j^k)\chi_{Q_j^k}(x)
\\ \nonumber
&\qquad \qquad \le C_{\tau,n}\, M f(x) + C_{\tau,n}\,\sum_{j,k}
\left(\avgint_{P_j^k} |f(x)|\,dx\right)\, \chi_{Q_j^k}(x)
\\ \nonumber
& \qquad \qquad = C_{\tau,n}\, M f(x) + C_{\tau,n}\, F(x),
\end{align}
where $P_j^k=(\widehat{Q}_j^k)^\tau$. We get the
desired estimate for the first term from Buckley's
theorem~\cite{MR1124164} with $p=2$:
\begin{align*}
\|M f\|_{L^2(Q_N,w)} \le \|M f\|_{L^2(w)} \leq C_{n}\,[w]_{A_2}
\|f\|_{L^2(w)}.
\end{align*}

\medskip

To estimate $F$ we use duality.  Fix a non-negative function $h\in
L^2(w)$ with $\|h\|_{L^2(w)}=1$. We use the weighted dyadic maximal
operator defined by
\[ M^d_\sigma g(x) = \sup_{x\in Q\in\Delta} \frac{1}{\sigma(Q)}\int_Q |f(x)|\sigma(x)\,dx. \]
where $\sigma$ is a weight (i.e., locally integrable and positive
a.e.). In particular  we use that $M^d_\sigma$ is bounded on
$L^2(\sigma)$ with constant bounded by $2$ (see \cite[Chapter 1,
Exercise 1.3.3]{MR2445437}). Therefore, by Remark~\ref{rem:disjoint}
we have that
\begin{align*}
\int_{Q_N} F(x)\,h(x)\,w(x)\,dx & = C_{\tau,n}\, \sum_{j,k}
\avgint_{P_j^k} |f(x)|\,dx
\int_{Q_j^k} w(x)h(x)\,dx  \\
& \leq  2 \cdot 2^{(\tau+1)n}\sum_{j,k}
\frac{w(P_j^k)}{|P_j^k|}\frac{w^{-1}(P_j^k)}{|P_j^k|} \; |E_j^k|\\
& \qquad \times \frac{1}{w^{-1}(P_j^k)} \int_{P_j^k} |f(x)|w(x)
w(x)^{-1}\,dx \\
& \qquad \times \frac{1}{w(Q_j^k)}\int_{Q_j^k} h(x)w(x)\,dx \\
& \leq C_{\tau,n}\, [w]_{A_2}\sum_{j,k} \int_{E_j^k} M^d_{w^{-1}}(fw)(x) M^d_w(h)(x)\,dx \\
& \leq C_{\tau,n}\, [w]_{A_2}\int_\subRn  M^d_{w^{-1}} (fw)(x)
M^d_w(h)(x)\,dx \\
& \leq C_{\tau,n}\,[w]_{A_2}
\left(\int_\subRn M^d_{w^{-1}} (fw)(x)^2 w(x)^{-1}\,dx\right)^{1/2} \\
& \qquad \qquad \times \left(\int_\subRn M^d_{w} (h)(x)^2 w(x)\,dx\right)^{1/2} \\
& \leq C_{\tau,n}\,[w]_{A_2}
\left(\int_\subRn |f(x)w(x)|^2 w(x)^{-1}\,dx\right)^{1/2} \\
& \qquad \qquad \times \left(\int_\subRn h(x)^2 w(x)\,dx\right)^{1/2} \\
& = C_{\tau,n}\,[w]_{A_2} \left(\int_\subRn
|f(x)|^2w(x)\,dx\right)^{1/2}.
\end{align*}
If we take the supremum over all such functions $h$, we conclude
that
$$
\|F\|_{L^2(Q_N,w)} \le C_{\tau,n}\,[w]_{A_2}\,\|f\|_{L^2(w)}.
$$
Combining our estimates we have that
\[
\left(\int_{Q_N} |H_\tau f(x)-m_{H_\tau f}(Q_N)|^2
w(x)\,dx\right)^{1/2} \leq C_{\tau,n}\,[w]_{A_2} \|f\|_{L^2(w)},
\]
and this completes the proof.

\end{proof}

\bibliographystyle{plain}
\bibliography{dyadic-hilbert}

\begin{thebibliography}{10}

\bibitem{MR1815249}
K.~Astala, T.~Iwaniec, and E.~Saksman.
\newblock Beltrami operators in the plane.
\newblock {\em Duke Math. J.}, 107(1):27--56, 2001.

\bibitem{MR1124164}
S.~M. Buckley.
\newblock Estimates for operator norms on weighted spaces and reverse {J}ensen
  inequalities.
\newblock {\em Trans. Amer. Math. Soc.}, 340(1):253--272, 1993.

\bibitem{MR0477058}
L.~Carleson.
\newblock Two remarks on {$H^{1}$} and {BMO}.
\newblock {\em Advances in Math.}, 22(3):269--277, 1976.

\bibitem{MR2351373}
D.~Cruz-Uribe, J.~M. Martell, and C.~P{\'e}rez.
\newblock Sharp two-weight inequalities for singular integrals, with
  applications to the {H}ilbert transform and the {S}arason conjecture.
\newblock {\em Adv. Math.}, 216(2):647--676, 2007.

\bibitem{cruz-uribe-martell-perez2010}
D.~Cruz-Uribe, J.~M. Martell, and C.~P{\'e}rez.
\newblock Sharp weighted estimates for classical operators.
\newblock {\em Preprint}, 2010.

\bibitem{MR2140200}
O.~Dragi{\v{c}}evi{\'c}, L.~Grafakos, M.~Pereyra, and S.~Petermichl.
\newblock Extrapolation and sharp norm estimates for classical operators on
  weighted {L}ebesgue spaces.
\newblock {\em Publ. Mat.}, 49(1):73--91, 2005.

\bibitem{MR1800316}
J.~Duoandikoetxea.
\newblock {\em Fourier analysis}, volume~29 of {\em Graduate Studies in
  Mathematics}.
\newblock American Mathematical Society, Providence, RI, 2001.
\newblock Translated and revised from the 1995 Spanish original by D.
  Cruz-Uribe.

\bibitem{MR946637}
N.~Fujii.
\newblock A proof of the {F}efferman-{S}tein-{S}tr\"omberg inequality for the
  sharp maximal functions.
\newblock {\em Proc. Amer. Math. Soc.}, 106(2):371--377, 1989.

\bibitem{MR1115188}
N.~Fujii.
\newblock A condition for a two-weight norm inequality for singular integral
  operators.
\newblock {\em Studia Math.}, 98(3):175--190, 1991.

\bibitem{MR658065}
J.B. Garnett and P.W. Jones.
\newblock B{MO} from dyadic {BMO}.
\newblock {\em Pacific J. Math.}, 99(2):351--371, 1982.

\bibitem{MR2445437}
L.~Grafakos.
\newblock {\em Classical {F}ourier analysis}, volume 249 of {\em Graduate Texts
  in Mathematics}.
\newblock Springer, New York, second edition, 2008.

\bibitem{MR0312139}
R.~Hunt, B.~Muckenhoupt, and R.~Wheeden.
\newblock Weighted norm inequalities for the conjugate function and {H}ilbert
  transform.
\newblock {\em Trans. Amer. Math. Soc.}, 176:227--251, 1973.

\bibitem{MR779906}
B.~Jawerth and A.~Torchinsky.
\newblock Local sharp maximal functions.
\newblock {\em J. Approx. Theory}, 43(3):231--270, 1985.

\bibitem{lacey-petermichl-reguera-rodriquezP2009}
M.~Lacey, S.~Petermichl, and M.~Reguera-Rodriguez.
\newblock Sharp ${A}_2$ inequality for {H}aar shift operators.
\newblock {\em Preprint}, 2009.

\bibitem{lernerP2009}
A.~Lerner.
\newblock A pointwise estimate for local sharp maximal function with
  applications to singular integrals.
\newblock {\em Preprint}, 2009.

\bibitem{MR2407233}
F.~Nazarov, S.~Treil, and A.~Volberg.
\newblock Two weight inequalities for individual {H}aar multipliers and other
  well localized operators.
\newblock {\em Math. Res. Lett.}, 15(3):583--597, 2008.

\bibitem{MR1756958}
S.~Petermichl.
\newblock Dyadic shifts and a logarithmic estimate for {H}ankel operators with
  matrix symbol.
\newblock {\em C. R. Acad. Sci. Paris S\'er. I Math.}, 330(6):455--460, 2000.

\bibitem{MR2354322}
S.~Petermichl.
\newblock The sharp bound for the {H}ilbert transform on weighted {L}ebesgue
  spaces in terms of the classical {$A\sb p$} characteristic.
\newblock {\em Amer. J. Math.}, 129(5):1355--1375, 2007.

\bibitem{MR2367098}
S.~Petermichl.
\newblock The sharp weighted bound for the {R}iesz transforms.
\newblock {\em Proc. Amer. Math. Soc.}, 136(4):1237--1249, 2008.

\bibitem{MR1894362}
S.~Petermichl and A.~Volberg.
\newblock Heating of the {A}hlfors-{B}eurling operator: weakly quasiregular
  maps on the plane are quasiregular.
\newblock {\em Duke Math. J.}, 112(2):281--305, 2002.

\end{thebibliography}

\end{document}